\def\undersetbrace#1\to#2{\underbrace{#2}_{#1}}
\def\oversetbrace#1\to#2{\overbrace{#2}^{#1}}
\def\AMSunderset#1\to#2{\underset{#1}{#2}}
\def\AMSoverset#1\to#2{\overset{#1}{#2}}
\newcommand{\nmb}[2]{\ifx!#1{\ref{nmb:#2}}%
\else\if.#1{\label{nmb:#2}}%
\else\if0#1{\label{nmb:#2}}%
\else{{#2}}%
\fi\fi\fi}
\newtheorem*{proposition*}{Proposition}
\newtheorem*{theorem*}{Theorem}
\newtheorem{lemma}[subsection]{Lemma}
\newtheorem*{lemma*}{Lemma}
\newtheorem{corollary}[subsection]{Corollary}
\newtheorem*{corollary*}{Corollary}
\def\idx{}               
\def\ign#1{}             
\def\o{\on{\circ}}
\def\ep{\varepsilon}
\def\ph{\varphi}
\def\ps{\psi}
\def\Ga{\Gamma}
\def\i{^{-1}}
\def\x{\times}
\def\p{\partial}
\let\on=\operatorname
\def\L{\mathcal L}
\def\Fl{\on{Fl}}
\def\Mf{\mathcal{M}f}
\begin{document}
\title[]
{Lie derivatives of sections of natural vector bundles
}
\author{Peter W. Michor}
\address{
Peter W. Michor:
Fakult\"at f\"ur Mathematik, Universit\"at Wien,
Oskar-Morgenstern-Platz 1, A-1090 Wien, Austria.
}
\email{Peter.Michor@univie.ac.at}
\date{\today}
\dedicatory{Dedicated to the memory of Joseph A.\ Wolf}

\keywords{Lie derivative, diffeotopy, natural bundle}

\subjclass[2020]{Primary 58A32}

\begin{abstract} Time derivatives of pullbacks and push forwards along smooth curves of diffeomorphism of sections of natural vector bundles are computed in terms of Lie derivatives along adapted non-autonomous vector fields by extending a key lemma in \cite{MauhartMichor92}. There is also the analogous result about the first non-vanishing derivative of higher order. 
\end{abstract}

\maketitle

\section{Introduction} \label{nmb:1}
The following is an adaptation of the rather well known method of Lie derivation along mapping $N\to M$ as explained for differential forms in \cite[31.11]{Michor08} and more generally for purely covariant tensor fields in \cite[12.2--12.5]{Michor80}. It is used in by now classical proofs of the  Poincar\'e lemma and the theorem of Darboux, see e.g. \cite{Moser65} and \cite{Weinstein71}.
Namely, we prove the the following corollary \ref{nmb:cor1} of   \cite[Lemma 6]{MauhartMichor92}; the need for this result arose during the preparation of \cite{BMV24}.

\begin{corollary*}
Let $\ph_t$ be a smooth curve of local 
diffeomorphisms. Then we get two time dependent vector fields 
$$X_{t_0}=T\ph_{t_0}\i \o \partial_t|_{t_0}\ph_t \text{ and } Y_{t_0} = \p_t|_{t_0}\ph_t \o  \ph_{t_0}\i$$ 
Then for any natural vector bundle 
functor $F$ and for any section $s\in \Ga(F(M))$ we have the 
first non-vanishing derivative
\begin{align*}
\tag{\ref{eq:Lie}}
 \p_t\ph_t^*s &= 
       \ph_t^*\mathcal{L}_{Y_t}s = \mathcal{L}_{X_t}\,\ph_t^*s\,. 
\\ \tag{\ref{eq:Lieinverse}}
\p_t(\ph_t)_*s &= \p_t(\ph_t^{-1})^*s = 
 -(\ph_t)_*\mathcal{L}_{X_t}s = -\mathcal{L}_{Y_t}\,(\ph_t)_*s\,.
\end{align*} 
\end{corollary*}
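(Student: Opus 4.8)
The plan is to deduce both lines from \cite[Lemma 6]{MauhartMichor92}, which handles a smooth curve $\rho_t$ of local diffeomorphisms with $\rho_{t_0}=\Id$ and asserts $\partial_t|_{t_0}\rho_t^*s=\mathcal{L}_{Z}s$ for $Z=\partial_t|_{t_0}\rho_t\in\X(M)$ (note $\rho_t$ need not be a flow), by reducing the general curve $\ph_t$ to a curve through the identity via left and right translation in the diffeomorphism group. First I would recall the formalism of natural vector bundles: $\psi\mapsto F(\psi)$ is functorial, $\psi^*s=F(\psi\i)\o s\o\psi$, $(\psi)_*s=(\psi\i)^*s=F(\psi)\o s\o\psi\i$, and $\mathcal{L}_\xi s=\partial_t|_0(\Fl^\xi_t)^*s$ is $\mathbb{R}$-linear in the time-independent field $\xi$, with $\psi^*\xi=T\psi\i\o\xi\o\psi$.

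\textbf{The pullback line.} Fixing $t_0$, I would write $\ph_t=(\ph_t\o\ph_{t_0}\i)\o\ph_{t_0}$, so that $\ph_t^*s=\ph_{t_0}^*\big((\ph_t\o\ph_{t_0}\i)^*s\big)$ with $\ph_{t_0}^*$ independent of $t$; the inner curve passes through $\Id$ at $t_0$ with velocity $(\partial_t|_{t_0}\ph_t)\o\ph_{t_0}\i=Y_{t_0}$, so Lemma 6 gives $\partial_t|_{t_0}\ph_t^*s=\ph_{t_0}^*\mathcal{L}_{Y_{t_0}}s$. Dually, writing $\ph_t=\ph_{t_0}\o(\ph_{t_0}\i\o\ph_t)$ gives $\ph_t^*s=(\ph_{t_0}\i\o\ph_t)^*(\ph_{t_0}^*s)$; here the inner curve passes through $\Id$ at $t_0$ with velocity $T\ph_{t_0}\i\o\partial_t|_{t_0}\ph_t=X_{t_0}$, and Lemma 6 applied to the section $\ph_{t_0}^*s$ yields $\partial_t|_{t_0}\ph_t^*s=\mathcal{L}_{X_{t_0}}(\ph_{t_0}^*s)$. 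Since both compute the same object, this proves the first line, and incidentally re-derives the naturality relation $\ph_{t_0}^*\mathcal{L}_{Y_{t_0}}s=\mathcal{L}_{X_{t_0}}\ph_{t_0}^*s$, consistent with $\ph_{t_0}^*Y_{t_0}=X_{t_0}$.

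\textbf{The pushforward line.} For the second line I would apply the first line to the curve $\psi_t:=\ph_t\i$, after computing $\partial_t|_{t_0}\ph_t\i=-X_{t_0}\o\ph_{t_0}\i$ by differentiating $\ph_t\o\ph_t\i=\Id$ at $t_0$. A short check then shows the two vector fields attached to $\psi_t$ are $T\psi_{t_0}\i\o\partial_t|_{t_0}\psi_t=-Y_{t_0}$ and $\partial_t|_{t_0}\psi_t\o\psi_{t_0}\i=-X_{t_0}$ (using $T\ph_{t_0}\o X_{t_0}=\partial_t|_{t_0}\ph_t$). Feeding these into the first line, together with $\mathbb{R}$-linearity of $\xi\mapsto\mathcal{L}_\xi$, $\psi_{t_0}^*=(\ph_{t_0})_*$ and $(\ph_t)_*s=(\ph_t\i)^*s$, gives $\partial_t(\ph_t)_*s=-(\ph_t)_*\mathcal{L}_{X_t}s=-\mathcal{L}_{Y_t}(\ph_t)_*s$, which is the second line.

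\textbf{Main obstacle.} All the substance is concentrated in \cite[Lemma 6]{MauhartMichor92}: a curve of diffeomorphisms through the identity is generally not the flow of any (even time-dependent) vector field, so one cannot read off $\partial_t|_{t_0}\rho_t^*s$ from the definition of $\mathcal{L}$, but must show this derivative depends only on the $1$-jet $\partial_t|_{t_0}\rho_t$ — via a Taylor expansion of $t\mapsto\rho_t$ at $t_0$ and the finite-order dependence of the natural bundle functor $F$ on jets of diffeomorphisms, comparing $\rho_t$ with $\Fl^Z_{t-t_0}$. Granting that lemma, the only care needed in the corollary itself is keeping $\ph_{t_0}$ versus $\ph_{t_0}\i$ on the correct side in the two factorizations and in the inverse-curve derivative; everything else is formal.
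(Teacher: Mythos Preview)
Your proposal is correct and follows essentially the same route as the paper: reduce to curves through $\Id_M$ by the two factorizations $\ph_t=(\ph_t\circ\ph_{t_0}^{-1})\circ\ph_{t_0}$ and $\ph_t=\ph_{t_0}\circ(\ph_{t_0}^{-1}\circ\ph_t)$, invoke Lemma~6 of \cite{MauhartMichor92} on each inner curve, and then derive \eqref{eq:Lieinverse} by applying \eqref{eq:Lie} to $\psi_t=\ph_t^{-1}$ after computing that the associated vector fields swap to $-Y_{t_0}$ and $-X_{t_0}$. The only cosmetic differences are that the paper shifts the parameter so the auxiliary curves pass through $\Id_M$ at $t=0$ (matching the literal hypothesis of Lemma~6) rather than at $t=t_0$, treats the $X_{t_0}$-identity before the $Y_{t_0}$-identity, and phrases the justification for commuting $\ph_{t_0}^*$ past $\partial_t$ as bounded linearity of $(\ph_{t_0}^{-1})^*$ on $\Gamma(F(M))$ rather than ``independence of $t$''.
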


We also include the analogous result for the first non-vanishing derivative of higher order at fixed time points in Corollary \ref{nmb:cor2}.

\section{Background from \cite{MauhartMichor92}}

\subsection{Curves of local diffeomorphisms}
Let $\ph :\mathbb R\x M\supset U_{\ph }\to M$ be a smooth mapping
where $U_{\ph }$ is an open neighborhood of 
$\{0\}\x M$ in $\mathbb R\x M$, such that
\begin{itemize}
\item $U_\ph\cap (\mathbb R\x \{x\})$ is connected for each $x\in M$,
\item $\ph _t$ is a 
diffeomorphism on its domain $U_{\ph_t}:=U_\ph\cap (\{t\}\x M)$,
\item and $\ph _0=Id_M$. 
\end{itemize}
We say that $\ph_t$ 
is a \idx{\it curve of local diffeomorphisms} though $Id_M$.
 From  Lemma \ref{nmb:lem2} below we see that if 
$\tfrac{\partial^j}{\partial t^j}|_0 \ph_t = 0$ 
for all $1\le j<k$, then 
$X:=\tfrac1{k!} \tfrac{\partial^k}{\partial t^k}|_0\ph_t$ 
is a well defined vector field on $M$. We say that $X$ is the 
\idx{\it first non-vanishing derivative} at 0 of the curve $\ph_t$ of 
local diffeomorphisms. We may paraphrase this as 
$(\partial^k_t|_0\ph_t^*)f=k!\L_Xf$. 

\begin{lemma} \label{nmb:lem2}
Let $c:\mathbb R\to M$ be a smooth curve. If 
$c(0)=x\in M$, $c'(0)=0,\dots,c^{(k-1)}(0)=0$, then $c^{(k)}(0)$ is a 
well defined tangent vector in $T_xM$ which is given by the 
derivation $f\mapsto (f\o c)^{(k)}(0)$ at $x$.

Moreover, if $\ps$ is a local diffeomorphism defined near $x\in M$, then $\ps\o c$ has again first non-vanishing derivative at 0 which is  given by 
$(\ps \o c)^{(k)} = T_x\ps. c^{(k)}(0)$. 
\end{lemma}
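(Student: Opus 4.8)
The plan is to prove Lemma \ref{nmb:lem2} in two stages: first the well-definedness of $c^{(k)}(0)$ as a tangent vector, then the naturality under composition with a local diffeomorphism. Both parts are local, so I work in a chart around $x\in M$, identifying a neighborhood of $x$ with an open set in $\mathbb R^n$ and $x$ with $0$.

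For the first part, the key point is that $c^{(k)}(0)$ a priori depends on the chart (the $k$-th derivative of a curve is not chart-independent in general), but under the vanishing hypotheses $c(0)=x$, $c'(0)=\dots=c^{(k-1)}(0)=0$ it is. First I would compute, for a smooth function $f$ defined near $x$, the derivative $(f\o c)^{(k)}(0)$ using the Fa\`a di Bruno / higher chain rule: it is a sum of terms involving $d^j f$ evaluated at $x$ contracted with products $c^{(i_1)}(0)\otimes\cdots\otimes c^{(i_j)}(0)$ where $i_1+\dots+i_j=k$. Every such term with $j\ge 2$ contains at least one factor $c^{(i)}(0)$ with $i<k$ (since the indices are positive and sum to $k$ but there are at least two of them), hence vanishes by hypothesis; only the single term $j=1$, namely $df(x).c^{(k)}(0)$, survives. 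Thus $(f\o c)^{(k)}(0) = df(x).c^{(k)}(0)$, which exhibits $f\mapsto (f\o c)^{(k)}(0)$ as the tangent vector $c^{(k)}(0)\in T_xM$ and simultaneously shows this is a derivation at $x$ and is independent of the chart, since the left-hand side is chart-free.

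For the second part, I would apply the first part twice. Let $\ps$ be a local diffeomorphism defined near $x$, put $y=\ps(x)$, and consider $d:=\ps\o c$. Then $d(0)=y$, and for $1\le j\le k-1$ the vanishing $d^{(j)}(0)=0$ follows from the first part applied to $d$ together with the vanishing of $(g\o \ps\o c)^{(j)}(0)$ for every $g$ near $y$ — the latter is just the statement that $g\o\ps$ is a smooth function near $x$ whose curve-composite with $c$ has vanishing $j$-th derivative at $0$, by the hypothesis on $c$. Hence $d$ has its first non-vanishing derivative (at most) at order $k$, and for any $g$ near $y$,
\[
g(d^{(k)}(0)) = (g\o d)^{(k)}(0) = \bigl((g\o\ps)\o c\bigr)^{(k)}(0) = d(g\o\ps)(x).c^{(k)}(0) = dg(y).\bigl(T_x\ps.c^{(k)}(0)\bigr),
\]
using the first part for $d$ in the first equality, for $c$ and the function $g\o\ps$ in the third, and the chain rule $d(g\o\ps)(x)=dg(y)\o T_x\ps$ in the last. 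Since this holds for all $g$, we conclude $d^{(k)}(0)=T_x\ps.c^{(k)}(0)$ as tangent vectors in $T_yM$.

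The main obstacle is purely bookkeeping: writing the higher-order chain rule cleanly enough to see that all "mixed" terms of order $\ge 2$ in the derivatives of $f$ die under the hypotheses. I would handle this either by citing Fa\`a di Bruno's formula directly, or — to keep the argument self-contained and coordinate-light — by induction on $k$, differentiating $(f\o c)'(t)=df(c(t)).c'(t)$ repeatedly and observing inductively that every term produced other than $df(c(t)).c^{(k)}(t)$ carries a factor $c^{(i)}$ with $1\le i<k$. No genuine difficulty arises in the second part once the first is in hand.
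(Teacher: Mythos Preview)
Your proposal is correct and fills in exactly the details that the paper elides: the paper's own proof simply cites \cite[Lemma 2]{MauhartMichor92} for the first claim and dispatches the second with the one line ``tangent vectors at $x$ are mapped by $T_x\ps$.'' Your Fa\`a di Bruno argument is the standard proof of the cited lemma, and your verification of the second claim via the derivation description is precisely the content behind that one-line naturality remark.
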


\begin{proof} The first claim is  \textrm{\cite[Lemma 2]{MauhartMichor92}}. The second claim follows since tangent vectors at $x$ are mapped by $T_x\ps$.   
\end{proof}

\subsection{Natural vector bundles} \label{nmb:ssec4}
See \cite[6.14]{KolarMichorSlovak93}.
Let $\Mf_m$ denote the category of all
smooth $m$-dimensional manifolds and local diffeomorphisms
between them. A \idx{\it vector bundle functor} or \idx{\it
natural vector bundle} is a functor $F$ which associates a
vector bundle $(F(M),p_M,M)$ to each manifold $M$ and a
vector bundle homomorphism 
$$\xymatrix{
F(M) \ar[r]^{F(f)} \ar[d]^{p_M} & F(N) \ar[d]^{p_N}\\
M  \ar[r]^{f} &    N     
}$$
to each $f:M\to N$ in $\Mf_m$, which covers $f$ and is fiber wise a
linear isomorphism. If $f$ is the embedding of an open subset of $N$ 
then this diagram turns out to be a pullback diagram.
We also point out that $f\mapsto F(f)$ maps 
smoothly parameterized families to smoothly parameterized families, see
\cite[14.8]{KolarMichorSlovak93}. Assuming this property all vector bundle 
functors were classified by \cite{Terng78}: They correspond to linear 
representations of  
higher jet groups, they are associated vector bundles to higher order 
frame bundles, see also \cite[14.8]{KolarMichorSlovak93}. 

Examples of vector bundle functors are
tangent and cotangent bundles, tensor bundles, densities, $M\mapsto L(TM,TM)$, and also the trivial 
bundle $M\x \mathbb R$.

\subsection{Pullback of sections} \label{nmb:ssec5}
Let $F$ be a vector bundle
functor on $\Mf_m$ as described in \ref{nmb:ssec4}. Let $M$ be an 
$m$-manifold  and let $\ph_t$ be a curve of local diffeomorphisms 
through $Id_M$ on $M$. Then
 $\ph_t$, for fixed $t$, is a diffeomorphism defined
on an open subset $U_{\ph_t}$ of $M$.
The mapping
$$\xymatrix{
F(M) \ar[d]^{p_M} &F(U_{\ph_t})  \ar[l]_{\supseteq}  \ar[d]  \ar[r]^{F(\ph_t)\quad}_{\cong\quad} & F(\ph_t(U_{\ph_t})) \ar[r]^{\subseteq} \ar[d] & F(M) \ar[d]^{p_M}
\\
M & U_{\ph_t} \ar[l]_{\supseteq} \ar[r]^{\ph_t\quad}_{\cong\quad} & \ph_t(U_{\ph_t}) \ar[r]^{\subseteq}  &  M    
}$$
is then a local vector bundle isomorphism.

We consider a section $s\in \Ga(F(M))$ of the vector
bundle $(F(M),p_M,M)$ and we define for $t\in \mathbb R$ \emph{pullback} and \emph{push forward} as
$$
\ph_t^*s := F(\ph_{t}\i)\o s\o \ph_t\,,\quad (\ph_t)_* s = (\ph_t\i)^* s = F(\ph_t) \o s \o \ph_t\i\,.
$$
These are local sections of the bundle $F(M)$. 
If $\ph_t$ is smooth curve of diffeomorphisms these are global sections. 
For each $x\in M$
the value $(\ph_t^*s)(x)\in F(M)_x := p_M\i(x)$ is defined, if $t$ is
small enough. So in the vector space $F(M)_x$ the expression 
$\tfrac d{dt}|_0(\ph_t^*s)(x)$ makes sense. These fit together  to a smooth 
section $\tfrac d{dt}|_0(\ph_t)^*s$
which is globally defined and is an element of $\Ga(F(M))$, by the following argument: 

For $x\in M$ there exists $\ep>0$ and an open neighborhoods  $U\subset V$ of $x$ with the closure $\bar U$ compact in $V$ such that $\ph_t^*s$ is a smooth section in $\Ga(F(V))$ for each $t\in (-\ep,\ep)$. Since $(-\ep,\ep)\x V\ni (t,y) \mapsto \ph_t^*s(y)\in F(V)$ is smooth, the curve $t\mapsto\ph_t^*s\in \Ga(F(V))$ is smooth into the Fr\'echet space   $\Ga(F(V))$ by \cite[Lemma 30.8.1]{KM97} with derivative
$\p_t\ph_t^*s \in \Ga(F(V))$.
 
If $\ph_t=\Fl^X_t$ is the flow of a vector field $X$ on $M$  
the section 
$$\L_Xs := \tfrac d{dt}|_0(\Fl^X_t)^*s$$
is called the \idx{\it Lie derivative} of $s$ along $X$. The Lie derivative satisfies  
$\L_X\L_Y-\L_Y\L_X=\L_{[X,Y]}$; see \cite[6.20]{KolarMichorSlovak93}.

\begin{lemma} \label{nmb:lem6} \textrm{\cite[Lemma 6]{MauhartMichor92}} 
Let $\ph_t$ be a smooth curve of local 
diffeomorphisms through $\on{Id}_M$ with first non-vanishing derivative
${k!}X=\partial^k_t|_0\ph_t$. Then for any vector bundle 
functor $F$ and for any section $s\in \Ga(F(M))$ we have the 
first non-vanishing derivative
$${k!}\L_Xs=\partial^k_t|_0\ph_t^*s.$$
\end{lemma}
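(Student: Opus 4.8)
\emph{Sketch of proof.} Everything in the statement is local in $M$ and, on a fixed fibre, happens in a fixed vector space, so the plan is to fix $x\in M$, a chart around $x$, and a point $u$ over $x$ in the $r$-th order frame bundle $P^rM$, and to compute the $k$-th derivative at $t=0$ of the $V$-valued curve $t\mapsto(\overline{\ph_t^*s})(u)$; the resulting pointwise formula is an element of $\Ga(F(M))$ because, as noted in \ref{nmb:ssec5}, $t\mapsto\ph_t^*s$ is already a smooth curve into $\Ga(F(V))$ for a suitable neighbourhood $V$ of $x$. Here I use the classification of \cite{Terng78} (see also \cite[14.8]{KolarMichorSlovak93} and \ref{nmb:ssec4}): $F$ has some finite order $r$ and $F(M)=P^rM\times_{G^r_m}V$ is associated to $P^rM$, so that a section $s$ is encoded by its $G^r_m$-equivariant, smooth frame form $\bar s\colon P^rM\to V$, and a direct check of the definitions gives
\[
\overline{\ph_t^*s}=\bar s\o P^r(\ph_t),
\]
where $P^r(\ph_t)\colon P^rM\to P^rM$ is the principal prolongation of $\ph_t$. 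So it all comes down to the $V$-valued curve $t\mapsto\bar s(P^r(\ph_t)(u))$.

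The decisive ingredient is the curve $\ga_u(t):=P^r(\ph_t)(u)$ in $P^rM$, with $\ga_u(0)=u$. Since $P^r(\ps)(u)$ depends on the local diffeomorphism $\ps$ only through the jet $j^r_{\pi(u)}\ps$, and smoothly so, we have $\ga_u(t)=\Psi(j^r_x\ph_t,u)$ for a fixed smooth map $\Psi$, and it suffices to control the jet curve $t\mapsto j^r_x\ph_t$. In the chart write $\ph_t(y)=y+t^k\,r(t,y)$ with $r$ smooth — permissible because $\p_t^j|_0\ph_t=0$ for $1\le j<k$ holds at \emph{every} point, so the remainder is a smooth function of $(t,y)$; then $r(0,\cdot)=X$, hence $\p_t^j|_0(j^r_x\ph_t)=0$ for $1\le j<k$, while $\p_t^k|_0(j^r_x\ph_t)=k!\,\p_t|_0(j^r_x\Fl^X_t)$ because $\Fl^X_t(y)=y+tX(y)+O(t^2)$ carries exactly the same jet of $X$ at $x$. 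Feeding this into Lemma \ref{nmb:lem2} — applied to the smooth map $\Psi(\cdot,u)$, and noting that the lemma works verbatim for composition with any smooth map, not merely a local diffeomorphism — the curve $\ga_u$ has first non-vanishing derivative of order $k$:
\[
\ga_u^{(j)}(0)=0\quad(1\le j<k),\qquad \ga_u^{(k)}(0)=k!\,\p_t|_0\,P^r(\Fl^X_t)(u)=k!\,\widetilde X(u),
\]
where $\widetilde X$ is the flow prolongation of $X$ to $P^rM$, the vector field with flow $\Fl^{\widetilde X}_t=P^r(\Fl^X_t)$. I expect this transfer — that prolongation to $P^rM$ sends the first non-vanishing derivative of a curve of local diffeomorphisms to the first non-vanishing derivative, with the value dictated by the flow — to be the only genuinely non-formal step; the finite order of $F$ enters precisely so as to reduce it to the finite-dimensional Lemma \ref{nmb:lem2}. (Alternatively one may stay on $M$: after trivializing $F$ over the chart, $F(\ps)$ acts on fibres by $\Lambda_\ps(y)\in GL(V)$ with cocycle $\Lambda_{\ps\o\chi}(y)=\Lambda_\ps(\chi y)\,\Lambda_\chi(y)$ and $\Lambda_{\Id}\equiv\Id$, one has $(\overline{\ph_t^*s})(x)\leftrightarrow\Lambda_{\ph_t}(0)\i\,\sigma(\ph_t(0))$ with $\sigma$ the chart form of $s$, and one argues as above with $\Lambda_{\ph_t}(0)\in GL(V)$ in place of $P^r(\ph_t)(u)$; or one can dispense with finite order and run the same bookkeeping for derivatives of composites in the convenient calculus of \cite{KM97}, using only that $\ps\mapsto F(\ps)$ is smooth.)

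It remains to compose once more with the smooth map $\bar s$. Lemma \ref{nmb:lem2} then gives $\p_t^j|_0(\bar s\o\ga_u)=0$ for $1\le j<k$ and $\p_t^k|_0(\bar s\o\ga_u)=d\bar s_u(\ga_u^{(k)}(0))=k!\,d\bar s_u(\widetilde X(u))$. Finally $d\bar s(\widetilde X)=\overline{\L_Xs}$: from the definition $\L_Xs=\p_t|_0(\Fl^X_t)^*s$ and the displayed identity $\overline{\ph_t^*s}=\bar s\o P^r(\ph_t)$ (applied to $\ph_t=\Fl^X_t$) we get $\overline{\L_Xs}=\p_t|_0(\bar s\o P^r(\Fl^X_t))=d\bar s\o\widetilde X$. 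Therefore $\overline{\p_t^k|_0\ph_t^*s}=k!\,\overline{\L_Xs}$ at every $u$, i.e. $\p_t^k|_0\ph_t^*s=k!\,\L_Xs$; and the same computation shows en route that $\p_t^j|_0\ph_t^*s=0$ for $1\le j<k$, so this is genuinely the first non-vanishing derivative.
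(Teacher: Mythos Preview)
The paper does not prove this lemma; it merely cites \cite[Lemma~6]{MauhartMichor92} and uses the result as a black box in the proofs of Corollaries~\ref{nmb:cor1} and~\ref{nmb:cor2}. So there is no ``paper's own proof'' to compare against here.

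Your argument is sound and is essentially the standard route: pass to the associated-bundle description $F(M)=P^rM\times_{G^r_m}V$ via the finite-order classification, translate sections into equivariant frame forms, and reduce the question to a first-non-vanishing-derivative computation for the curve $t\mapsto P^r(\ph_t)(u)$ in the finite-dimensional manifold $P^rM$, where Lemma~\ref{nmb:lem2} applies. The Taylor-with-smooth-remainder step $\ph_t(y)=y+t^k r(t,y)$, $r(0,\cdot)=X$, cleanly yields $\p_t^k|_0\,j^r_x\ph_t=k!\,j^r_xX=k!\,\p_t|_0\,j^r_x\Fl^X_t$, and the identification $\overline{\ph_t^*s}=\bar s\circ P^r(\ph_t)$ is checked correctly. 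The one point worth making explicit is exactly the one you flag: the second clause of Lemma~\ref{nmb:lem2} is stated for a local diffeomorphism $\ps$, but the argument (the derivation $f\mapsto(f\circ c)^{(k)}(0)$ is pushed forward by $T_x\ps$) goes through verbatim for any smooth map, which you need twice---once for $\Psi(\,\cdot\,,u)$ and once for $\bar s$. With that remark your sketch is complete.
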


\section{The results}


\begin{corollary}\label{nmb:cor1}
Let $\ph_t$ be a smooth curve of (local) 
diffeomorphisms. Consider the two time dependent (locally defined) vector fields 
$$X_{t_0}=T\ph_{t_0}\i \o \partial_t|_{t_0}\ph_t \text{ and } Y_{t_0} = \p_t|_{t_0}\ph_t \o  \ph_{t_0}\i$$ 
Then for any vector bundle 
functor $F$ and for any section $s\in \Ga(F(M))$ we have 
\begin{align}
\label{eq:Lie}
 \p_t\ph_t^*s &= 
       \ph_t^*\mathcal{L}_{Y_t}s = \mathcal{L}_{X_t}\,\ph_t^*s\,. 
\\ \label{eq:Lieinverse}
\p_t(\ph_t)_*s &= \p_t(\ph_t^{-1})^*s = 
 -(\ph_t)_*\mathcal{L}_{X_t}s = -\mathcal{L}_{Y_t}\,(\ph_t)_*s\,.
\end{align} 
\end{corollary}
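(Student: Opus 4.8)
The strategy is to reduce everything to the $k=1$ case of Lemma \ref{nmb:lem6}. That lemma concerns curves \emph{through} $\Id_M$, whereas here $\ph_t$ is merely a smooth curve of (local) diffeomorphisms, so at each fixed time $t_0$ one first translates the base point. Put
$$\ps_s := \ph_{t_0+s}\o\ph_{t_0}\i\,, \qquad \ch_s := \ph_{t_0}\i\o\ph_{t_0+s}\,.$$
Then $\ps_0=\ch_0=\Id_M$, so (on suitably shrunk neighborhoods, and for $s$ small) both are curves of local diffeomorphisms through $\Id_M$ in the sense of \ref{nmb:ssec5}, and differentiating in $s$ at $0$ gives $\p_s|_0\ps_s=\p_t|_{t_0}\ph_t\o\ph_{t_0}\i=Y_{t_0}$ and $\p_s|_0\ch_s=T\ph_{t_0}\i\o\p_t|_{t_0}\ph_t=X_{t_0}$. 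These are precisely the vector fields to which Lemma \ref{nmb:lem6} applies with $k=1$.

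The only algebraic input is that pullback of sections is contravariant: from functoriality, $F(f\i\o g\i)=F(f\i)\o F(g\i)$, hence $(g\o f)^*\si=f^*(g^*\si)$ for any section $\si$. Applying this to the two factorisations above,
$$\ph_{t_0+s}^*s=(\ps_s\o\ph_{t_0})^*s=\ph_{t_0}^*(\ps_s^*s)\,, \qquad \ph_{t_0+s}^*s=(\ph_{t_0}\o\ch_s)^*s=\ch_s^*(\ph_{t_0}^*s)\,.$$
Differentiate at $s=0$. The operator $\ph_{t_0}^*$ acts fiber-wise by the fixed linear isomorphisms $F(\ph_{t_0}\i)_{\ph_{t_0}(x)}$, hence commutes with $\p_s|_0$; and Lemma \ref{nmb:lem6} gives $\p_s|_0\ps_s^*s=\L_{Y_{t_0}}s$ and $\p_s|_0\ch_s^*(\ph_{t_0}^*s)=\L_{X_{t_0}}(\ph_{t_0}^*s)$. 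Thus $\p_t|_{t_0}\ph_t^*s=\ph_{t_0}^*\L_{Y_{t_0}}s=\L_{X_{t_0}}\ph_{t_0}^*s$, and since $t_0$ is arbitrary this is \eqref{eq:Lie}. (Equality of the last two expressions also follows from naturality of the Lie derivative, since $X_{t_0}=\ph_{t_0}^*Y_{t_0}$.)

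For \eqref{eq:Lieinverse} the first equality is the definition of push forward. One then applies the already established \eqref{eq:Lie} to the curve $t\mapsto\ph_t\i$. Implicit differentiation of $\ph_t\o\ph_t\i=\Id_M$ yields $\p_t\ph_t\i=-T\ph_t\i\o\p_t\ph_t\o\ph_t\i$, from which the two fields attached to $\ph_t\i$ compute to $T\ph_t\o\p_t\ph_t\i=-Y_t$ in the ``$X$''-slot and $\p_t\ph_t\i\o\ph_t=-X_t$ in the ``$Y$''-slot. Substituting into \eqref{eq:Lie} for $\ph_t\i$ and using linearity of the Lie derivative in the vector field gives $\p_t(\ph_t)_*s=-(\ph_t)_*\L_{X_t}s=-\L_{Y_t}(\ph_t)_*s$, i.e.\ \eqref{eq:Lieinverse}.

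The work that needs genuine care — the expected main obstacle, though bookkeeping rather than a conceptual difficulty — is the handling of domains: checking that $\ps_s$, $\ch_s$ and $t\mapsto\ph_t\i$ are curves of local diffeomorphisms through $\Id_M$ on neighborhoods small enough for Lemma \ref{nmb:lem6} to apply near each point, and then assembling the pointwise-in-$t$ identities into identities of smooth curves in $\Ga(F(V))$ exactly as in the smoothness argument of \ref{nmb:ssec5}. One further small point: if the relevant first derivative ($Y_{t_0}$, $X_{t_0}$, \dots) happens to vanish, then the auxiliary curve is either locally constant or has first non-vanishing derivative at some $k\ge2$, and in either case Lemma \ref{nmb:lem6} still gives that the $s$-derivative at $0$ of the pulled-back section vanishes $=\L_0(\cdot)$, so the stated formulas continue to hold.
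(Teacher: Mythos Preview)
Your proof is correct and follows essentially the same route as the paper: you introduce the same two auxiliary curves through $\Id_M$ (your $\ps_s,\ch_s$ are the paper's $\bar\ph_t,\tilde\ph_t$), apply Lemma~\ref{nmb:lem6} with $k=1$ to each, use linearity of $\ph_{t_0}^*$ to pass the derivative through, and then derive \eqref{eq:Lieinverse} from \eqref{eq:Lie} by implicit differentiation of $\ph_t\o\ph_t\i=\Id_M$ to see that the $X$- and $Y$-fields swap with a sign under inversion. Your closing remarks on domains and on the degenerate case where the first derivative vanishes are sound refinements that the paper leaves implicit.
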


\begin{proof}
Let $\tilde\ph_t = \ph_{t_0}\i \o \ph_{t+t_0}$, a smooth curve of (local) 
diffeomorphisms through $\on{Id}_M$. 
We have 
$$
\p_t|_0 \tilde\ph_t = \p_t|_0 \ph_{t_0}\i \o \ph_{t+t_0} =   T\ph_{t_0}\i \o \p_t|_0\ph_{t+t_0} = T\ph_{t_0}\i \o \p_t|_{t_0}\ph_{t} = X_{t_0}\,.
$$
By Lemma \ref{nmb:lem6} we we get that 
\begin{align*}
\L_{X_{t_0}} s &= \p_t|_{t_0} \tilde\ph_t^*s =  \p_t|_{t_0} (\ph_{t_0}\i \o \ph_t)^*s = \p_t|_{t_0} \ph_t^*(\ph_{t_0}\i)^*s
\\
\implies \quad \L_{X_{t_0}}\ph_{t_0}^*s &= \p_t|_{t_0} \ph_t^*(\ph_{t_0}\i)^*s\quad\text{ which is part of \eqref{eq:Lie}. }
\end{align*}
For the second part of \eqref{eq:Lie} we consider $\bar\ph_t =  \ph_{t+t_0}\o \ph_{t_0}\i$, another smooth curve of local 
diffeomorphisms through $\on{Id}_M$. Here we have, again by Lemma \ref{nmb:lem6}, 
\begin{align*}
\p_t|_0 \bar\ph_t &= \p_t|_0 \ph_{t+t_0} \o \ph_{t_0}\i =    \p_t|_{t_0}\ph_{t} \o \ph_{t_0}\i  = Y_{t_0}\,.
\\
\L_{Y_{t_0}} s &= \p_t|_0 \bar \ph_t^* s = \p_t|_0  (\ph_{t+t_0}\o \ph_{t_0}\i)^* s = \p_t|_0  ( \ph_{t_0}\i)^*\ph_{t+t_0}^* s 
\\&
=  ( \ph_{t_0}\i)^*\p_t|_0 \ph_{t+t_0}^* s \,,
\\&\qquad
\text{ since } ( \ph_{t_0}\i)^*:\Ga(F(M))\to\Ga(F(M))\text{ is bounded linear,}
\\&
= ( \ph_{t_0}\i)^*\p_t|_{t_0} \ph_{t}^* s \text{ which implies the second part of  \eqref{eq:Lie}.}
\end{align*}
To show \eqref{eq:Lieinverse} note first that 
\begin{align*}
  &0 =  \p_t(\on{Id}) = \p_t(\ph_t^{-1}\o \ph_t) = (\p_t\ph_t^{-1})\o \ph_t + T\ph_t^{-1}\o \p_t\ph_t
\\&
\p_t(\ph_t^{-1}) = - T\ph_t^{-1}\o (\p_t\ph_t) \o \ph_t\i
\\&
T\ph_t\o \p_t(\ph_t^{-1}) = -(\p_t\ph_t)\o \ph_t^{-1} = - Y_t
\\&
(\p_t\ph_t^{-1})\o \ph_t = - T\ph_t^{-1}\o (\p_t\ph_t) = -X_t
\end{align*}
Hence, replacing $\ph_t$ by $\ph_t^{-1}$ in \eqref{eq:Lie} replaces $X_t$ by $-Y_t$ and $Y_t$ by $-X_t$ and 
noting that $(\ph_t)_* s = (\ph_t\i)^* s$
transforms \eqref{eq:Lie} into \eqref{eq:Lieinverse}.
\end{proof}

First non-vanishing derivatives of order higher than one makes sense only at discrete time points and not along whole curves of diffeomorphism  which would turn out to be constant. The result is as follows.

\begin{corollary}\label{nmb:cor2}
Let $\ph_t$ be a smooth curve of 
diffeomorphisms which for a fixed time $t_0$ has a first non-vanishing derivative $k!\Xi = \p_t^k|_{t_0} \ph_t$; it is a vector field along the diffeomorphism $\ph_{t_0}$
$$\xymatrix{
& TM \ar[d]^{\pi_M} 
\\
M \ar[ru]^{\Xi} \ar[r]_{\ph_{t_0}} & M  
}.$$
Consider the two (now autonomous) vector fields 
$$X=T\ph_{t_0}\i \o \Xi \text{ and } Y = \Xi \o  \ph_{t_0}\i$$ 
Then for any vector bundle 
functor $F$ and for any section $s\in \Ga(F(M))$ we have the fist non-vanishing derivatives at time $t_0$
\begin{align}
\label{eq:Lie2}
 \p_t^k|_{t_0}\ph_{t}^*s &= 
       k!\ph_{t_0}^*\mathcal{L}_{Y}s = k!\mathcal{L}_{X}\,\ph_{t_0}^*s\,. 
\\ \label{eq:Lie2inverse}
\p_t^k|_{t_0}(\ph_t)_*s &= \p_t^k|_{t_0}(\ph_t^{-1})^*s = 
 -k!(\ph_{t_0})_*\mathcal{L}_{X}s = -k!\mathcal{L}_{Y}\,(\ph_{t_0})_*s\,.
\end{align} 
\end{corollary}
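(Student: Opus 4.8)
The plan is to reduce Corollary \ref{nmb:cor2} to Lemma \ref{nmb:lem6} by exactly the same bootstrap trick used for Corollary \ref{nmb:cor1}, only now applied to the $k$-th derivative. Write $\tilde\ph_t = \ph_{t_0}\i \o \ph_{t+t_0}$ and $\bar\ph_t = \ph_{t+t_0}\o \ph_{t_0}\i$; both are smooth curves of local diffeomorphisms through $\on{Id}_M$. The first order of business is to check that each has a first non-vanishing derivative at $0$ of order exactly $k$, and to identify it. For $\tilde\ph_t$ one has $\p_t^j|_0 \tilde\ph_t = T\ph_{t_0}\i\o \p_t^j|_{t_0}\ph_t$ for every $j$, because $\ph_{t_0}\i$ is a fixed diffeomorphism applied after; this uses the second statement of Lemma \ref{nmb:lem2} (a fixed local diffeomorphism maps first non-vanishing derivatives of a curve by its tangent map, and kills lower derivatives that were already $0$). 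Hence $\p_t^j|_0\tilde\ph_t=0$ for $1\le j<k$ and $\p_t^k|_0\tilde\ph_t = T\ph_{t_0}\i\o\Xi\cdot k! = k!X$. Similarly $\bar\ph_t = \ph_{t_0}\i$-conjugate-free on the left but post-composed with the fixed $\ph_{t_0}\i$ on the right; here $\p_t^j|_0\bar\ph_t = (\p_t^j|_{t_0}\ph_t)\o\ph_{t_0}\i$ as maps $M\to TM$, so again the lower derivatives vanish and $\p_t^k|_0\bar\ph_t = \Xi\o\ph_{t_0}\i\cdot k! = k!Y$.

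Next I would feed these into Lemma \ref{nmb:lem6}. Applied to $\tilde\ph_t$ with first non-vanishing derivative $k!X$, it gives $k!\L_X s = \p_t^k|_0\tilde\ph_t^* s = \p_t^k|_0(\ph_{t_0}\i\o\ph_{t+t_0})^* s = \p_t^k|_0 \ph_{t+t_0}^*(\ph_{t_0}\i)^* s = \p_t^k|_{t_0}\ph_t^*\big((\ph_{t_0}\i)^*s\big)$. Since $(\ph_{t_0}\i)^*=(\ph_{t_0})_*\i$ need not commute trivially here, one instead observes $\tilde\ph_t^* s = (\ph_{t_0}\i\o\ph_{t+t_0})^* s = \ph_{t+t_0}^*\,(\ph_{t_0}\i)^* s$, and since pullback along the \emph{fixed} diffeomorphism $\ph_{t_0}\i$ is a bounded linear operator on $\Ga(F(M))$ commuting with $\p_t^k$, we get $k!\L_X\ph_{t_0}^* s$ after replacing $s$ by $\ph_{t_0}^*s$ — wait, more directly: apply Lemma \ref{nmb:lem6} to $\tilde\ph_t$ and the section $\ph_{t_0}^* s$ in place of $s$. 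Then $k!\L_X(\ph_{t_0}^*s) = \p_t^k|_0\tilde\ph_t^*(\ph_{t_0}^*s) = \p_t^k|_0 \ph_{t+t_0}^*(\ph_{t_0}\i)^*\ph_{t_0}^* s = \p_t^k|_0\ph_{t+t_0}^* s = \p_t^k|_{t_0}\ph_t^* s$, which is the second equality of \eqref{eq:Lie2}. For the first equality, apply Lemma \ref{nmb:lem6} to $\bar\ph_t$ and $s$: $k!\L_Y s = \p_t^k|_0\bar\ph_t^* s = \p_t^k|_0(\ph_{t_0}\i)^*\ph_{t+t_0}^* s = (\ph_{t_0}\i)^*\p_t^k|_0\ph_{t+t_0}^* s = (\ph_{t_0}\i)^*\p_t^k|_{t_0}\ph_t^* s$, using boundedness and linearity of $(\ph_{t_0}\i)^*$ exactly as in the proof of Corollary \ref{nmb:cor1}. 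Applying $\ph_{t_0}^* = ((\ph_{t_0}\i)^*)\i$ to both sides gives $k!\ph_{t_0}^*\L_Y s = \p_t^k|_{t_0}\ph_t^* s$, the first equality of \eqref{eq:Lie2}. That these are genuinely \emph{first non-vanishing} derivatives (i.e. the lower ones vanish) follows from the same application of Lemma \ref{nmb:lem6}, since that lemma asserts $\p_t^j|_0\tilde\ph_t^* s = 0$ for $1\le j<k$.

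Finally, \eqref{eq:Lie2inverse} follows from \eqref{eq:Lie2} by the substitution $\ph_t \rightsquigarrow \ph_t\i$, exactly as in Corollary \ref{nmb:cor1}. One checks that if $\ph_t$ has first non-vanishing $k$-th derivative $k!\Xi$ at $t_0$, then $\ph_t\i$ also has vanishing derivatives of orders $1,\dots,k-1$ at $t_0$ and first non-vanishing derivative $\p_t^k|_{t_0}\ph_t\i = -T\ph_{t_0}\i\o\Xi\o\ph_{t_0}\i\cdot k!$; this comes from differentiating $\ph_t\i\o\ph_t=\on{Id}$ and arguing by induction that all mixed lower-order terms drop out, leaving $\p_t^k|_{t_0}\ph_t\i = -T\ph_{t_0}\i\o(\p_t^k|_{t_0}\ph_t)\o\ph_{t_0}\i$. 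Hence the vector field $X$ for $\ph_t\i$ is $T\ph_{t_0}\o(-T\ph_{t_0}\i\o\Xi\o\ph_{t_0}\i) = -\Xi\o\ph_{t_0}\i = -Y$, and the vector field $Y$ for $\ph_t\i$ is $-T\ph_{t_0}\i\o\Xi = -X$. Substituting into \eqref{eq:Lie2} and using $(\ph_t)_* s = (\ph_t\i)^* s$ turns it into \eqref{eq:Lie2inverse}.

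The main obstacle is the first paragraph: verifying that post- and pre-composition with the fixed diffeomorphism $\ph_{t_0}^{\pm1}$ kills the already-vanishing lower derivatives and transforms the $k$-th derivative by the expected tangent map, and — for \eqref{eq:Lie2inverse} — the inductive bookkeeping showing that differentiating $\ph_t\i\o\ph_t = \on{Id}$ $k$ times at $t_0$ leaves only the single term $-T\ph_{t_0}\i\o(\p_t^k|_{t_0}\ph_t)\o\ph_{t_0}\i$, every other summand involving a derivative $\p_t^j|_{t_0}\ph_t$ with $1\le j<k$ which vanishes by hypothesis. Both are applications of Lemma \ref{nmb:lem2} and the chain rule, but they are the only places where something needs to be said; once past them, the rest is a verbatim repetition of the proof of Corollary \ref{nmb:cor1} with $\p_t|_{\cdot}$ replaced by $\tfrac1{k!}\p_t^k|_{\cdot}$.
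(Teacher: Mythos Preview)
Your proposal is correct and follows essentially the same route as the paper: introduce $\tilde\ph_t=\ph_{t_0}\i\o\ph_{t+t_0}$ and $\bar\ph_t=\ph_{t+t_0}\o\ph_{t_0}\i$, identify their first non-vanishing derivatives as $k!X$ and $k!Y$ via Lemma~\ref{nmb:lem2}, apply Lemma~\ref{nmb:lem6}, and then deduce \eqref{eq:Lie2inverse} from \eqref{eq:Lie2} by the substitution $\ph_t\mapsto\ph_t\i$ after differentiating $\ph_t\i\o\ph_t=\on{Id}$. Your treatment is in fact slightly more careful than the paper's, which simply asserts the two-term expansion $(\p_t^k|_{t_0}\ph_t\i)\o\ph_{t_0}+T\ph_{t_0}\i\o\p_t^k|_{t_0}\ph_t=0$ without the inductive justification you supply.
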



\begin{proof} 
We 
consider  $\tilde\ph_t = \ph_{t_0}\i \o \ph_{t+t_0}$, a smooth curve of local diffeomorphisms through $\on{Id}_M$. By the second part of Lemma \ref{nmb:lem2}  it has the following first non-vanishing derivatives at $t_0$
\begin{align*}
\p_t^k|_0 \tilde \ph_t &= \p_t^k|_0  (\ph_{t_0}\i \o \ph_{t+t_0}) = T\ph_{t_0}\i \o \p_t^k|_{t_0}\ph_t = k!  T\ph_{t_0}\i \o \Xi = k!X\,.
\end{align*}
Then Lemma \ref{nmb:lem6} may be applied as 
follows:
\begin{align*}
k!\L_{X} s &= \p_t^k|_{t_0} \tilde\ph_t^*s =  \p_t^k|_{t_0} (\ph_{t_0}\i \o \ph_t)^*s = \p_t^k|_{t_0} \ph_t^*(\ph_{t_0}\i)^*s
\\
\implies \quad k!\L_{X}\ph_{t_0}^*s &= \p_t|_{t_0} \ph_t^*(\ph_{t_0}\i)^*s\quad\text{ which is part of \eqref{eq:Lie2}. }
\end{align*}
For the second part of \eqref{eq:Lie2} we consider  $\bar\ph_t =  \ph_{t+t_0}\o \ph_{t_0}\i$, another smooth curve through $\on{Id}_M$. As above we have: 
\begin{align*}
\p_t^k|_0 \bar\ph_t &= \p_t^k|_0 \ph_{t+t_0} \o \ph_{t_0}\i =    \p_t^k|_{t_0}\ph_{t} \o \ph_{t_0}\i  = k!\Xi\o \ph_{t_0} = k! Y\,.
\\
k!\L_{Y} s &= \p_t^k|_0 \bar \ph_t^* s = \p_t^k|_0  (\ph_{t+t_0}\o \ph_{t_0}\i)^* s = \p_t^k|_0  ( \ph_{t_0}\i)^*\ph_{t+t_0}^* s 
\\&
=  ( \ph_{t_0}\i)^*\p_t^k|_0 \ph_{t+t_0}^* s \,,
\\&\qquad
\text{ since } ( \ph_{t_0}\i)^*:\Ga(F(M))\to\Ga(F(M))\text{ is bounded linear,}
\\&
= ( \ph_{t_0}\i)^*\p_t^k|_{t_0} \ph_{t}^* s \text{ which implies the second part of  \eqref{eq:Lie2}.}
\end{align*}
To show \eqref{eq:Lie2inverse} note first that for the first non-vanishing derivatives at $t_0$ we have
\begin{align*}
 0 =  \p_t^k|_{t_0}(\on{Id}) &= \p_t^k|_{t_0}(\ph_t^{-1}\o \ph_t) 
  = (\p_t^k|_{t_0}\ph_t^{-1})\o \ph_{t_0} + T\ph_{t_0}^{-1}\o \p_t^k|_{t_0}\ph_t
\\
\p_t^k|_{t_0}(\ph_t^{-1}) &= - T\ph_{t_0}^{-1}\o (\p_t^k|_{t_0}\ph_t) \o \ph_{t_0}\i
\\
T\ph_{t_0}\o \p_t^k|_{t_0}(\ph_t^{-1}) &= -(\p_t^k|_{t_0}\ph_t)\o \ph_{t_0}^{-1} = - Y
\\
(\p_t^k|_{t_0}\ph_t^{-1})\o \ph_{t_0} &= - T\ph_{t_0}^{-1}\o (\p_t^k|_{t_0}\ph_t) = -X
\end{align*}
Hence, replacing $\ph_{t_0}$ by $\ph_{t_0}^{-1}$ in \eqref{eq:Lie2} replaces $X$ by $-Y$ and $Y$ by $-X$ and 
noting that $(\ph_{t_0})_* s = (\ph_{t_0}\i)^* s$
transforms \eqref{eq:Lie2} into \eqref{eq:Lie2inverse}.
\end{proof}

\def\cprime{$'$}

\end{document}